\newtheorem{theorem}{Theorem}[section]
\newtheorem{prop}[theorem]{Proposition}
\newtheorem{cor}[theorem]{Corollary}
\newtheorem{lemma}[theorem]{Lemma}
\theoremstyle{definition}
\theoremstyle{remark}
\newtheorem{remark}[theorem]{Remark}
\numberwithin{equation}{section}
\newcommand{\NN}{{\mathbb N}}
\newcommand{\CC}{{\mathbb C}}
\newcommand{\out}[1]{\ }
\let\PSH=\psh
\let\cal=\mathcal
\renewcommand{\phi}{\varphi}
\begin{document}

\title[A note on equality of complex Monge-Amp\`ere measures]{A note on
equality on plurifinely open sets of some complex Monge-Amp\`ere measures}

\author[M. El Kadiri]{Mohamed El Kadiri}
\address{University of Mohammed V
\\Department of Mathematics,
\\Faculty of Science
\\P.B. 1014, Rabat
\\Morocco}
\email{elkadiri30@hotmail.com}



\subjclass[2010]{31D05, 31C35, 31C40.}
\keywords{Plurisubharmonic function, Plurifine topology, Plurifinely open set,
Monge-Amp\`ere operator, Monge-Amp\`ere measures.}

\begin{abstract} Our aim in this paper is to prove that 
if plurisubharmonic functions $u_1,. . . , u_n$, $v_1,. . ., v_n$ 
in the domain of definition of the complex Monge-Amp\`ere 
operator on a domain set $D\subset \CC^n$ 
($n\geq 1$) are such that $u_1= v_1,
. . ., u_n=v_n$ on a Borel plurifinely open set $\Omega\subset D$, then
$$dd^cu_1\wedge ...\wedge dd^cu_n=dd^cv_1\wedge ...\wedge dd^cv_n$$ 
on $\Omega$. This  extends an earlier
result obtained by the author in \cite{EK}.
\end{abstract}
\maketitle

\section{Introduction}
The plurifine topology on an open set $D\subset
\CC^n$, $n\geq 1$, is the coarsest topology on $D$
that makes continuous all plurisubharmonic functions
on $D$. For properties of this topology we refer the reader to
\cite{BT}. An open set relative to this topology is called a
plurifinely open set. 

In  \cite{BT} Bedford and Taylor
proved that if two locally bounded plurisubharmonic functions $u$ and $v$ on a domain
$D\subset \CC^n$ are such  that $u=v$ on a Borel  
plurifinely open set $\Omega \subset D$,
then the restrictions of the Monge-Amp\`ere measures
$(dd^cu)^n$ and $(dd^cv)^n$ to $\Omega$ are equal (see \cite[Corollary 4.3]{BT}). 
In \cite{EK} 
we have extended this result to plurisubharmonic functions $u$
and $v$ in the domain of definition of the complex Monge-Amp\`ere
operator on $D$ (see \cite[Theorem 3.5]{EK}). Our main goal in this paper is to prove the following 
more general result:

\begin{theorem}\label{thm1.1}
Let $D\subset \CC^n$ ($n\geq 1$) be a bounded hyperconvex domain and let
$\Omega$ be a Borel plurifinely open subset of $D$. 
Assume that
$u_1,. . . , u_n, v_1,. . ., v_n$ are plurisubharmonic 
functions in the Cegrell class ${\cal E}(D)$ such that $u_1= v_1,
. . ., u_n=v_n$ on $\Omega$ then
$$dd^cu_1\wedge ...\wedge dd^cu_n|_\Omega=dd^cv_1\wedge ...\wedge dd^cv_n|_\Omega.$$
\end{theorem}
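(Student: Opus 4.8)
The plan is to reduce the mixed Monge-Amp\`ere statement to the pure (single-function) case already established in \cite{EK}, by means of a polarization identity. The key observation is that the mixed measure $dd^cu_1\wedge\cdots\wedge dd^cu_n$ is obtained from the pure measures $(dd^cw)^n$ through a fixed universal linear combination, and that the plurifine locality of \cite[Theorem 3.5]{EK} applies directly to each such pure measure.

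First I would record the polarization formula. For $u_1,\ldots,u_n\in{\cal E}(D)$, writing $w_S=\sum_{j\in S}u_j$ for $S\subseteq\{1,\ldots,n\}$, one has
$$n!\,dd^cu_1\wedge\cdots\wedge dd^cu_n=\sum_{S\subseteq\{1,\ldots,n\}}(-1)^{n-|S|}(dd^cw_S)^n$$
as an identity of (signed) Radon measures on $D$. This rests on two facts from Cegrell's theory on the hyperconvex domain $D$: that ${\cal E}(D)$ is stable under addition, so each $w_S\in{\cal E}(D)$ and each term $(dd^cw_S)^n$ is a well-defined positive measure; and that the mixed operator $(u_1,\ldots,u_n)\mapsto dd^cu_1\wedge\cdots\wedge dd^cu_n$ is symmetric and multilinear on ${\cal E}(D)$. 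Combined with $dd^cw_S=\sum_{j\in S}dd^cu_j$, multilinearity expands $(dd^cw_S)^n$, and the inclusion-exclusion count over $S$ annihilates every term except the $n!$ permutations of the fully mixed one, yielding the displayed identity.

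Next I would bring in the hypotheses. Since $u_j=v_j$ on $\Omega$ for every $j$, also $w_S=\sum_{j\in S}u_j$ equals $w_S'\isdef\sum_{j\in S}v_j$ on $\Omega$ for every $S$, and both belong to ${\cal E}(D)$, hence to the domain of definition of the Monge-Amp\`ere operator. Because $\Omega$ is a Borel plurifinely open set, \cite[Theorem 3.5]{EK} applies to the pair $(w_S,w_S')$ and gives $(dd^cw_S)^n|_\Omega=(dd^cw_S')^n|_\Omega$ for each $S$. Restriction to the Borel set $\Omega$ commutes with the finite linear combination above, so applying the polarization identity to the $u$'s and to the $v$'s and comparing the sums term by term gives
$$n!\,(dd^cu_1\wedge\cdots\wedge dd^cu_n)|_\Omega=\sum_{S}(-1)^{n-|S|}(dd^cw_S)^n|_\Omega=\sum_{S}(-1)^{n-|S|}(dd^cw_S')^n|_\Omega=n!\,(dd^cv_1\wedge\cdots\wedge dd^cv_n)|_\Omega,$$
and dividing by $n!$ finishes the proof.

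The main obstacle is the justification of the polarization identity at the level of measures rather than of smooth forms: one must know that all the mixed and pure Monge-Amp\`ere measures appearing are genuine Radon measures for functions in ${\cal E}(D)$, and that the operator is honestly multilinear there with no $\infty-\infty$ ambiguity. This is exactly what Cegrell's construction guarantees on a bounded hyperconvex domain, which is why the hypothesis $u_j,v_j\in{\cal E}(D)$ is the natural setting. Once this is in hand, the plurifine locality is imported verbatim from the pure case of \cite{EK}, and the remaining combinatorial bookkeeping is routine.
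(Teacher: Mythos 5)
Your proposal is correct, but it takes a genuinely different route from the paper's proof. The paper argues from scratch: Lemma \ref{lemma1.1} writes $\Omega$, up to a pluripolar set, as a countable union of sets $\{\varphi_j>0\}$ with $\varphi_j$ nonnegative plurisubharmonic on balls; Proposition \ref{prop1.5} then establishes a one-factor replacement principle, $dd^cu_0\wedge T|_\Omega=dd^cu_1\wedge T|_\Omega$, by truncating with $\max(j\psi,u_k)$, invoking the bounded case \cite[Corollary 3.4]{EKW} on each $\{\varphi>0\}$ and passing to the limit via Cegrell's convergence theorem \cite[Corollary 5.2]{Ce}; finally the theorem follows by swapping $u_m$ for $v_m$ one slot at a time. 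You instead reduce the mixed case to the pure case by polarization and import the plurifine locality wholesale from \cite[Theorem 3.5]{EK}. That reduction is legitimate and not circular: the earlier theorem is prior, independently proved work, its scope (as quoted in the paper's Corollary) is exactly $u,v\in {\cal E}(D)$ on a bounded hyperconvex domain, and ${\cal E}(D)$ is closed under addition, so each $w_S=\sum_{j\in S}u_j$ falls within it; your inclusion--exclusion count ($\sum_{S\supseteq T}(-1)^{n-|S|}=0$ unless $T=\{1,\dots,n\}$) is also right. The one point you should not leave as an appeal to ``Cegrell's theory'' is multilinearity: ${\cal E}(D)$ is a convex cone, not a vector space, so what you need (and all you need, since polarization only ever evaluates the operator at sums, with the signs $(-1)^{n-|S|}$ sitting outside) is additivity in each slot, e.g.\ $dd^c(u+u')\wedge T=dd^cu\wedge T+dd^cu'\wedge T$; this is not an explicitly stated theorem in \cite{Ce}, but it follows by choosing decreasing ${\cal E}_0$-approximants (noting ${\cal E}_0$ is itself closed under addition, by the energy estimates), where additivity is immediate from linearity of $dd^c$ on currents, and passing to the weak* limit using \cite[Theorem 4.2]{Ce}. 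You should also phrase the restriction step with the negatively-signed terms moved across the equality, so that only positive, locally finite measures are compared on Borel subsets of $\Omega$. As for what each approach buys: yours is shorter and derives the new theorem from the old one by soft algebra; the paper's slot-by-slot method is self-contained (it reproves rather than cites the pure case) and localizes more flexibly, which is what underwrites the remark in Section \ref{section2} that the result persists for Blocki's domain of definition ${\cal D}$ on arbitrary domains --- a setting where closure under addition, and hence your polarization argument, is less readily available.
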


\begin{cor}[{\cite[Theorem 3.5]{EK}}]
Let $D\subset \CC^n$ ($n\geq 1$) be a bounded hyperconvex domain and let
$\Omega$ be a Borel plurifinely open subset of $D$. 
Assume that $u, v\in \cal E(D)$ are such that $u= v$ on $\Omega$ then
$$(dd^cu)^n|_\Omega=(dd^cv)^n|_\Omega.$$
\end{cor}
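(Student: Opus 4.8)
The plan is to obtain the corollary as the diagonal special case of Theorem~\ref{thm1.1}. Given $u,v\in\cal E(D)$ with $u=v$ on $\Omega$, I would set $u_1=\dots=u_n=u$ and $v_1=\dots=v_n=v$, and then verify that the hypotheses of Theorem~\ref{thm1.1} hold for these two $n$-tuples. Indeed, all of $u_1,\dots,u_n,v_1,\dots,v_n$ belong to $\cal E(D)$ by assumption, and the required coincidences $u_i=v_i$ on $\Omega$ for $i=1,\dots,n$ are nothing but the single hypothesis $u=v$ on $\Omega$ repeated $n$ times. Thus no new conditions need to be checked.

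Applying Theorem~\ref{thm1.1} to these tuples immediately gives
$$dd^cu_1\wedge\dots\wedge dd^cu_n|_\Omega=dd^cv_1\wedge\dots\wedge dd^cv_n|_\Omega.$$
It then remains only to identify the two sides with the asserted powers. Since the mixed complex Monge-Amp\`ere operator is well defined and symmetric on the Cegrell class $\cal E(D)$, and since every factor on the left equals $dd^cu$ while every factor on the right equals $dd^cv$, I would conclude that $dd^cu_1\wedge\dots\wedge dd^cu_n=(dd^cu)^n$ and $dd^cv_1\wedge\dots\wedge dd^cv_n=(dd^cv)^n$ as Borel measures on $D$. Restricting the displayed identity to $\Omega$ then yields exactly $(dd^cu)^n|_\Omega=(dd^cv)^n|_\Omega$.

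Because the statement is a pure specialization of Theorem~\ref{thm1.1}, there is no substantive obstacle in this deduction: the entire difficulty has already been absorbed into the proof of the mixed statement, and the corollary recovers the earlier result \cite[Theorem 3.5]{EK} for free. The only point deserving a word of justification is the identification of the diagonal mixed measure with the $n$-th power $(dd^cu)^n$, which is standard for functions in $\cal E(D)$ and requires no hypotheses beyond $u,v\in\cal E(D)$.
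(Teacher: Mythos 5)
Your proposal is correct and coincides with the paper's own (implicit) proof: the corollary is precisely the diagonal case $u_1=\dots=u_n=u$, $v_1=\dots=v_n=v$ of Theorem~\ref{thm1.1}, and the identification of the diagonal mixed measure with $(dd^cu)^n$ is exactly the fact recalled in Section~\ref{section2} from Cegrell's construction. Nothing further is needed.
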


\begin{remark} 
Let $u$ a plurisubharmonic function on a domain set $D\subset \CC^n$ that is bounded
near the boundary of $D$ and $T$ a closed positive current on $D$
of bidegree $(p,p)$ ($p<n$).
By Proposition 4.1 from \cite{HH} the current $dd^c(u T)$ has locally finite mass  in $D$. 
This allows one to define the current
$dd^cu\wedge T$ on $D$ by putting
$$dd^cu\wedge T=dd^c(uT).$$ 
In \cite{EK} 
we have proved that if $T$ is of bidegree $(n-1,n-1)$ and 
$u=v$ on a Borel plurifinely open set $\Omega\subset D$, then
$$dd^cu\wedge T|_\Omega=dd^cv\wedge T|_\Omega$$ 
see \cite[Theorem 4.5]{EK}. This result is to be compared with Theorem \ref{thm1.1}.
\end{remark}

\section{The cegrell classes and the domain of definition of the
Monge-Amp\`ere operator}\label{section2}

Let $D$ be a bounded hyperconvex domain in $\CC^n$. From \cite{Ce} 
we recall the following subclasses of $\PSH_-(D)$, the cone of
nonpositive plurisubharmonic functions
on $D$:

$${\cal E}_0={\cal E}_0(D)=\{\varphi\in \PSH_-(D): \lim_{z\to \partial \Omega}\varphi(z)=0, \ \int_D (dd^c\varphi)^n<\infty\},$$

$${\cal F}={\cal F}(D)=\{\varphi\in \PSH_-(D): \exists \ \cal E_0\ni \varphi_j\searrow \varphi, \
\sup_j\int_D (dd^c\varphi_j)^n<\infty\},$$

and
\begin{equation*}
\begin{split}
\cal E=\cal E(D)=\{\varphi \in \PSH_-(D): \forall z_0\in \Omega, \exists
\text{ a neighborhood } \omega \ni z_0,&\\
{\cal E}_0\ni \varphi_j \searrow \varphi \text{ on } \omega, \ \sup_j \int_D(dd^c\varphi_j)^n<\infty \}.
\end{split}
\end{equation*}

As in \cite{Ce}, we note that if $u\in \PSH_-(D)$ then $u\in \cal E(D)$ if and only
if for every $\omega\Subset D$, there is $v\in \cal F(D)$ such that $v\ge u$ and
$v=u$ on $\omega$. On the other hand we have $\PSH_-(D)\cap L^\infty_{loc}(D)\subset \cal E(D)$.
The classical Monge-Amp\`ere operator on $\PSH_-(D)\cap L^\infty_{loc}(D)$ can
be extended uniquely to
the class $\cal E(D)$, the extended operator is still denoted by $(dd^c\cdot)^n$.
More generally,
Cegrell has proved that for given functions $u_1,...,u_n$ in the class $\cal E(D)$, 
the Monge-Amp\`ere measure $dd^cu_1\wedge...\wedge dd^cu_n$ is well-defined
(see \cite[Theorem 4.2 and Definition 4.3]{Ce}) and that this measure coincides
with $dd^cu_1\wedge...\wedge dd^cu_n$ as defined in \cite[p. 113]{Kl}
when  $u_1,...,u_n$ are locally bounded. If $u_1=\cdots =u_n=u\in \cal E(D)$,
then $dd^cu_1\wedge...\wedge dd^cu_n$ is the measure $(dd^cu)^n$.
According to Theorem 4.5 from \cite{Ce}, the class $\cal E$ is the biggest class
$\cal K\subset \PSH_-(D)$ satisfying the following conditions:

(1) If $u\in \cal K$, $v\in \PSH_-(D)$ then $\max(u,v)\in \cal K$.

(2) If $u\in \cal K$, $\varphi_j\in \PSH_-(D)\cap L^\infty_{loc}(D)$, $\varphi_j\searrow u$, 
$j\to +\infty$,
then $((dd^c\varphi_j)^n)$ is weak*-convergent.

We also recall, following Blocki, cf \cite{Bl}, that the general domain of definition $\cal D$ of
the Monge-Amp\`ere operator on an arbitrary domain $D$ of $\CC^n$ consists of
plurisubharmonic functions $u$
on $D$ for which there is a nonnegative (Radon) measure $\mu$ on $D$
such that for any open set $U\subset D$ and any decreasing sequence
$(u_j)$ of smooth (that is, of class $\cal C^\infty$)
pluri-subharmonic functions on $U$ converging to $u$, the sequence 
of measures $(dd^cu_j)^n$ is weakly-convergent
to (the restriction to $U$ of) $\mu$. The measure $\mu$ is 
denoted by $(dd^cu)^n$ and called the Monge-Amp\`ere
measure of (or associated with) $u$.
When $D$ is bounded and hyperconvex then $\cal D\cap\PSH_-(D)$ coincides with the
class $\cal F=\cal F(D)$, cf. \cite{Bl}.

Theorem \ref{thm1.1} and its corollary remain true  if $\cal E(D)$ is replaced by
the Blocki domain of definition $\cal D$ of the Monge-Amp\`ere operator
for any domain $D\subset \CC^n$ ($D$ bounded if $n=1$).

\section{Proof of Theorem 1.1}

Let $n\geq 1$ be an integer. For a domain $D$ in $\CC^n$ and a set $E\subset D$, let 
$u_E=u(E,D,\cdot)$ be the function defined on $D$ by 
$$u_E(z):=\sup \{v(z): v\in \PSH_{-}(D), \ v\le -1 \text{ on } E\}$$
and $u_E^*$ the upper semicontinuous regularization of $u_E$, that is the function
defined on $D$ by
$$u_E^*(z)=\limsup_{\zeta\to z}u_E(\zeta)$$
for every $z\in D$, (the relative extremal function of $E$, see \cite[p. 158]{Kl}). 
We clearly have 
$$-1\leq u_E\leq u_E^*\leq 0$$ 
on $D$ and $u_E=u_E^*$ q.e.  on $D$, that is, $u_E=u_E^*$ outisde a pluripolar 
subset of $D$.

A set $E\subset D$ is said to be pluri-thin at a $z\in D$ if 
$z\notin \overline E$ or there is an open set $\sigma\subset D$ 
containing $z$ and a plurisubharmonic function $u$ on $\sigma$  such that 
$$\limsup_{\zeta\to z, \zeta\in E}u(\zeta)<u(z).$$ 
According to Theorem 2.3 from \cite{BT}, a set $\Omega\subset D$ is plurifinely  open
if and only if $\complement \Omega=:D\setminus \Omega$ is pluri-thin at every point $z\in \Omega$.

\begin{prop}[{\cite[Proposition 3.4]{EK}}]\label{prop1.1}
Let $D$ be a domain of $\CC^n$. If a subset $E$ of $D$ is pluri-thin at 
$z\in \CC^n$, then there is an open
set $V_z$ containing $z$ such that
$u_{(E\setminus \{z\})\cap V_z}^*(z)>-1$.
\end{prop}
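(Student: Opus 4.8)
The plan is to reduce everything to producing a single, well-chosen competitor in the supremum defining the relative extremal function. First I would dispose of the trivial case: if $z\notin\overline E$, I choose $V_z$ to be a ball with $V_z\cap E=\emptyset$, so that $(E\setminus\{z\})\cap V_z=\emptyset$ and its relative extremal function is identically $0>-1$. So assume $z\in\overline E$ and take, from the definition of pluri-thinness, an open $\sigma\ni z$ and $u\in\PSH(\sigma)$ with $\limsup_{E\ni\zeta\to z}u(\zeta)<u(z)$. I fix a constant $c$ with $\limsup_{E\ni\zeta\to z}u(\zeta)<c<u(z)$ and a ball $B=B(z,\rho)\Subset\sigma$ small enough that $u<c$ on $(E\setminus\{z\})\cap B$; put $m=\sup_{\overline B}u<\infty$, so that $m>c$. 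On $B$ the function $v_1:=(u-m)/(m-c)$ is plurisubharmonic and satisfies $v_1\le 0$, $v_1(z)>-1$, and $v_1<-1$ at every point of $(E\setminus\{z\})\cap B$.

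The key reduction is that it suffices to exhibit one $v\in\PSH_{-}(D)$ with $v\le -1$ on $(E\setminus\{z\})\cap V_z$ and $v(z)>-1$ for some neighborhood $V_z$ of $z$: then $u_{(E\setminus\{z\})\cap V_z}\ge v$ pointwise, and since upper semicontinuous regularization is monotone while $v$ is already upper semicontinuous, $u^*_{(E\setminus\{z\})\cap V_z}\ge v$, whence $u^*_{(E\setminus\{z\})\cap V_z}(z)\ge v(z)>-1$. The main obstacle is exactly the globalization of the local competitor $v_1$. Since $u$ attains its maximum $m$ on $\partial B$, the function $v_1$ tends to $0$ at some boundary points; and no plurisubharmonic function that is $\le 0$ on $D$ and $<0$ somewhere can reach $0$ at an interior point, by the maximum principle. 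Consequently a direct $\max$-gluing of $v_1$ across $\partial B$ with a nonpositive function cannot preserve the sublevel set $\{v_1\le -1\}$, and this is the point that must be handled with care.

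To overcome this I would first shift $v_1$ strictly below $0$: choose $0<s<(u(z)-c)/(m-c)$ and set $\tilde v:=v_1-s$, so that $\tilde v\le -s$ on $B$, $\tilde v<-1$ on $(E\setminus\{z\})\cap B$, and $\tilde v(z)>-1$. For the outer function I take the relative extremal function $g:=u^*_{\overline{B(z,r_0)}}$ of a small closed ball, which is plurisubharmonic, satisfies $-1\le g\le 0$, equals $-1$ on $B(z,r_0)$, and, by comparison with the explicit concentric ball-condenser potential $\log(|\cdot-z|/R)/\log(R/r_0)$ on a ball $B(z,R)\supset D$, satisfies $g\ge -s$ on $\partial B$ as soon as $r_0$ is chosen small enough that $\log(R/\rho)\le s\,\log(R/r_0)$. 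Setting $V_z:=B(z,r_0)$ and
\[
 v:=\begin{cases}\max(\tilde v,\,g)&\text{on }B,\\ g&\text{on }D\setminus B,\end{cases}
\]
the standard gluing lemma applies, because $\limsup_{\zeta\to\xi}\tilde v(\zeta)\le -s\le g(\xi)$ for every $\xi\in\partial B$, so that $v\in\PSH_{-}(D)$. On $(E\setminus\{z\})\cap V_z$ one has $\tilde v<-1$ and $g=-1$, hence $v=-1$; while $v(z)=\max(\tilde v(z),-1)=\tilde v(z)>-1$. By the reduction above this gives $u^*_{(E\setminus\{z\})\cap V_z}(z)>-1$, as required. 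I expect the delicate step, and the one worth checking most carefully, to be the comparison bound guaranteeing $g\ge -s$ on $\partial B$ for a sufficiently small inner radius $r_0$, since this is what makes the gluing condition at $\partial B$ hold.
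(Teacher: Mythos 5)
Your proof is correct, but it takes a genuinely different route from the paper's. The paper disposes of the whole globalization problem in one stroke by citing \cite[Proposition 2.2]{BT}, which upgrades the local witness of pluri-thinness to a global one: it provides $u\in\PSH_-(D)$ with $\limsup_{\zeta\to z,\,\zeta\in E,\,\zeta\ne z}u(\zeta)<u(z)$, whence $u\le\alpha<u(z)$ on $(E\setminus\{z\})\cap V_z$ for some $\alpha<0$ and suitable $V_z$, and the competitor is simply the rescaling $v=-u/\alpha$ (which is nonpositive, not ``nonnegative'' as the paper's text has it --- a typo), already admissible in the supremum with no gluing needed. You instead start from the bare definition of pluri-thinness (a witness only on a small $\sigma\ni z$) and reprove the globalization by hand: normalize on a ball $B\Subset\sigma$, shift down by $s$ to create room at $\partial B$, and glue $\max(\tilde v,g)$ with the relative extremal function $g=u^*_{\overline{B(z,r_0)}}$, the gluing inequality $g\ge-s$ on $\partial B$ being secured by comparison with $\log(\lvert\cdot-z\rvert/R)/\log(R/r_0)$. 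I checked the details and they hold up; note in particular that your constraint $s<(u(z)-c)/(m-c)\le 1$ automatically forces $r_0<\rho$, so $V_z\subset B$ as your argument needs. What your route buys is self-containedness --- you are in effect reproving the content of \cite[Proposition 2.2]{BT} --- at the cost of one restriction: choosing $R$ with $D\subset B(z,R)$ requires $D$ bounded, whereas the proposition is stated for an arbitrary domain. This is a minor caveat rather than a gap: for genuinely unbounded domains the statement degenerates anyway (for $D=\CC^n$ the class $\PSH_-(D)$ consists of constants, so $u^*_E\equiv-1$ for any nonempty $E$, and the paper's appeal to a global witness on $D$ implicitly needs a similar restriction), and in the only place the proposition is invoked (the proof of Lemma \ref{lemma1.1}) one immediately passes to extremal functions relative to balls $B'_{j_z}$, where your bounded-case argument, together with the monotonicity of the relative extremal function under shrinking the ambient domain, delivers everything that is actually used.
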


\begin{proof}
The result is obvious if $z\notin \overline E$. Suppose that $z\in \overline E$ and that
$E$ is pluri-thin at $z$. According to \cite[Proposition 2.2]{BT},
there is a plurisubharmonic function $u\in \PSH_-(D)$ such that
$$\limsup_{\zeta\to z, \zeta\in E, \zeta\ne z}u(\zeta)< u(z).$$
Hence, there is an open set $V_z\subset D$ containing $z$, and a real $\alpha< 0$ such that
$$u(\zeta)\le \alpha <u(z)$$
for every $\zeta\in (E\setminus \{z\})\cap V_z$. The function $v=\frac{-1}{\alpha}u$
is  nonnegative and plurisbharmonic on $\Omega$ and satisfies $v(\zeta)\leq -1$
for every $\zeta \in (E\setminus \{z\})\cap V_z$, so that
$$-1<v(z)\le u_{(E\setminus \{z\})\cap V_z}^*(z).$$
\end{proof}

\begin{lemma}\label{lemma1.1}
Let $D\subset \CC^n$ ($n\geq 2$) be a domain and let $\Omega$
be a plurifinely open subset of $D$. There exists a set
$J\subset \NN$ with the property that for each $j\in J$
there exists an open ball $B_j\subset \overline B_j\subset D$ and a
function  $\varphi_j\in \PSH_+(B_j)$ such that $\Omega\subset \bigcup_{j\in J}\{\varphi_j>0\}$
and that $\bigcup_{j\in J}\{\varphi_j>0\}\setminus \Omega$ is pluripolar.
\end{lemma}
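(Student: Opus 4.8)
The plan is to produce the functions $\varphi_j$ explicitly from relative extremal functions attached to a fixed countable family of balls, so that the full inclusion $\Omega\subset\bigcup_j\{\varphi_j>0\}$ is obtained directly rather than only up to a pluripolar set (as the quasi-Lindel\"of property would give). First I fix the countable base $\{B_j\}_{j\in\NN}$ consisting of all open balls with rational centre and radius whose closure is contained in $D$. For each $j$ I set $A_j=B_j\setminus\Omega=B_j\cap\complement\Omega$ and define
$$\varphi_j=u^*(A_j,B_j,\cdot)+1,$$
where $u^*(A_j,B_j,\cdot)$ is the regularized relative extremal function of $A_j$ computed in the ball $B_j$. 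Since $-1\le u^*(A_j,B_j,\cdot)\le 0$ and the regularized relative extremal function is plurisubharmonic, each $\varphi_j$ lies in $\PSH_+(B_j)$, and I take $J=\NN$.

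Next I check that the excess is pluripolar. On $A_j$ one has $u(A_j,B_j,\cdot)=-1$ by the very definition of the relative extremal function, and $u(A_j,B_j,\cdot)=u^*(A_j,B_j,\cdot)$ quasi-everywhere; hence $\varphi_j=0$ outside a pluripolar subset of $A_j$. Because $\{\varphi_j>0\}\subset B_j$, the set $\{\varphi_j>0\}\setminus\Omega=\{\varphi_j>0\}\cap A_j$ is pluripolar, and a countable union of pluripolar sets is pluripolar, so $\bigcup_{j}\{\varphi_j>0\}\setminus\Omega$ is pluripolar, which is the second assertion.

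The main step, and the only real obstacle, is to prove the full inclusion $\Omega\subset\bigcup_j\{\varphi_j>0\}$. Fix $z\in\Omega$. Since $\Omega$ is plurifinely open, $\complement\Omega$ is pluri-thin at $z$ by Theorem 2.3 of \cite{BT}; if $z\notin\overline{\complement\Omega}$ a whole Euclidean ball about $z$ lies in $\Omega$ and the inclusion is clear, so I may assume $z\in\overline{\complement\Omega}$. By Proposition 2.2 of \cite{BT} there exist $u\in\PSH_-(D)$, an open neighbourhood $V_z\subset D$ of $z$ and a real $\alpha<0$ with $u\le\alpha$ on $\complement\Omega\cap V_z$ and $u(z)>\alpha$ (here I use $z\notin\complement\Omega$). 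I then choose a base ball $B_j$ with $z\in B_j\subset V_z$. Setting $v=u/(-\alpha)$, the function $v$ is plurisubharmonic and nonpositive on $B_j$, satisfies $v\le -1$ on $A_j\subset\complement\Omega\cap V_z$, and $v(z)=u(z)/(-\alpha)>-1$; thus $v$ is admissible in the supremum defining $u(A_j,B_j,\cdot)$ and
$$\varphi_j(z)\ge u(A_j,B_j,z)+1\ge v(z)+1>0.$$
Hence $z\in\{\varphi_j>0\}$, and since $z\in\Omega$ was arbitrary the inclusion $\Omega\subset\bigcup_{j\in\NN}\{\varphi_j>0\}$ follows, completing the proof. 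The delicate point throughout is this localization: pluri-thinness is a purely local condition at $z$, so one must pass to a small ball $B_j$ inside the separating neighbourhood $V_z$ before the competitor $v$ forces the extremal function to exceed $-1$ at $z$; working with the extremal function of the global complement $\complement\Omega$ in $D$ would in general leave $u^*$ equal to $-1$ at $z$.
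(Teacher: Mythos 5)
Your proof is correct and follows essentially the same route as the paper: both define $\varphi_j$ as the regularized relative extremal function of $\complement\Omega$ intersected with a base ball (computed in that ball) plus $1$, use the scaled competitor $v=u/(-\alpha)$ coming from pluri-thinness to get $\varphi_j(z)>0$, and use the quasi-everywhere identity $u_E=u_E^*$ to make the excess pluripolar. The only differences are presentational: you take $J=\NN$ and inline the paper's Proposition \ref{prop1.1} directly in the ball, which neatly avoids the paper's intermediate comparison between the extremal function relative to $D$ and relative to $B_j$.
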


\begin{proof}
Let $\{B'_j: j\in \NN\}$ be a base of the Euclidean topology on $\CC^n$ formed by open
balls relative to the Euclidean norm on $\CC^n$ such that 
$\overline {B'_j}\subset D$ for every $j$, 
and let $z\in \Omega$. The set $\complement \Omega=D\setminus \Omega$ being thin at 
$z$, then, according to Proposition
\ref{prop1.1}, there is an integer $j_z$ such that 
$z\in B'_{j_z}$ and $u^*_{(\complement \Omega) \cap B'_{j_z}}(z)>-1$.
Denoting by $\bar u^*_{(\complement \Omega)\cap B'_{j_z}}$ the function
defined in the same manner as $u^*_{({\complement \Omega})\cap B'_{j_z}}$ with $D$
replaced by $B_{j_z}$, we obviously have
$$z\in V_z:=\{\bar u^*_{({\complement \Omega})\cap B'_{j_z}}>-1\}(\subset \overline{B'_{j_z}})$$
because
$$-1<u^*_{\complement \Omega\cap B'_{j_z}}(z)\leq \bar u^*_{{\complement \Omega}
\cap B'_{j_z}}(z).$$ 
It is clear that  $V_z$ is a plurifinely open set.
On the set $\complement \Omega\cap B'_{j_z}$ we have
$\bar u^*_{{\complement \Omega}\cap B'_{j_z}}=-1$ q.e., and hence
$V_z=\Omega\cap V_z\cup F_z$ for some pluripolar set $F_z\subset B'_{j_z}$.
On the other hand, we have
$$\bigcup_{z\in \Omega} V_z
=\bigcup_{z\in \Omega}\{\bar u^*_{({\complement \Omega})\cap B'_{j_z}}>-1\}
=\bigcup_{j\in J}\{\bar u^*_{({\complement \Omega})\cap B'_j}>-1\},$$
where $J=\{j_z: z\in \Omega\} (\subset \NN)$. For each $j\in J$, there is a point 
$z_j\in \Omega$ such that
$j=j_{z_j}$, so that
$$\Omega\subset \bigcup_{z\in \Omega}V_z=\bigcup_{j\in J}\{\bar u^*_{({\complement \Omega})
\cap B'_{j_{z_j}}}>-1\}
=\bigcup_{j\in J}V_{z_j}.$$
For every $j\in J$ write  $B_j=B'_{j_{z_j}}$ and $\varphi_j=
\bar u^*_{({\complement \Omega})\cap B_j}+1$ and
$P=\bigcup_{j\in J}F_{z_j}$. Then $P$ is pluripolar
and we have $\bigcup\{\varphi_j>0\}\setminus P\subset \Omega\subset \bigcup\{\varphi_j>0\}$
as stated.
\end{proof}

\begin{prop}\label{prop1.5}
Let $D\subset \CC^n$ ($n\geq 1$) be a bounded hyperconvex domain and let
$\Omega$ be a plurifinely open subset of $D$ of the form
$\Omega=\bigcup_j\{\varphi_j>0\}$, where each $\phi_j$ is
a nonnegative plurisubharmonic function
defined on an open ball $B_j\subset D$. Assume that
$u_0,. . . , u_n \in \cal E(D)$ such that $u_0 = u_1$ on $\Omega$.
Then
\begin{equation}\label{eq1.5}
dd^cu_0 \wedge T|_\Omega = dd^cu_1 \wedge T|_\Omega,
\end{equation}
where $T := dd^cu_2 \wedge . . . \wedge dd^cu_n.$
\end{prop}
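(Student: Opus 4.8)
The plan is to establish \eqref{eq1.5} in three stages: reduce to a single ball, settle the locally bounded case by polarisation, and then remove the boundedness hypothesis by truncation. For the reduction, since $\Omega=\bigcup_j\{\varphi_j>0\}$ is a countable union it suffices to show that the two positive measures $\mu:=dd^cu_0\wedge T$ and $\nu:=dd^cu_1\wedge T$ agree on each $G_j:=\{\varphi_j>0\}$: writing an arbitrary Borel set $A\subset\Omega$ as the disjoint union of the sets $A\cap G_j\setminus\bigcup_{k<j}G_k$ and using $\mu|_{G_j}=\nu|_{G_j}$ on each piece yields $\mu|_\Omega=\nu|_\Omega$. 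So I fix one index, write $B=B_j$, $\varphi=\varphi_j$ and $G=\{\varphi>0\}$, and note that $\varphi\ge 0$ is plurisubharmonic and, being upper semicontinuous, locally bounded on $B$, so $\varphi\in\PSH\cap L^\infty_{loc}(B)$ while $G$ is a Borel plurifinely open set.

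\emph{The locally bounded case.} Assume first that $u_0,\dots,u_n\in\PSH_-(D)\cap L^\infty_{loc}(D)$. For parameters $\lambda_2,\dots,\lambda_n\ge 0$ set $f^{(\ell)}_\lambda:=u_\ell+\sum_{i=2}^n\lambda_iu_i$ for $\ell=0,1$; these are locally bounded plurisubharmonic functions, and $f^{(0)}_\lambda=f^{(1)}_\lambda$ on $G$ because $u_0=u_1$ there. Applying \cite[Corollary 4.3]{BT} to this pair gives $(dd^cf^{(0)}_\lambda)^n|_G=(dd^cf^{(1)}_\lambda)^n|_G$ for every such $\lambda$. Expanding both sides by the multilinearity and symmetry of the mixed Monge-Amp\`ere operator produces, on $G$, an identity of polynomials in $\lambda$ whose coefficients are Radon measures; comparing the coefficient of $\lambda_2\cdots\lambda_n$, which is $n!\,dd^cu_\ell\wedge dd^cu_2\wedge\cdots\wedge dd^cu_n$, yields \eqref{eq1.5} on $G$.

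\emph{Removing boundedness.} For general $u_i\in\mathcal{E}(D)$ put $u_i^{(k)}:=\max(u_i,-k)\in\mathcal{E}(D)\cap L^\infty_{loc}(D)$, so that $u_i^{(k)}\searrow u_i$ as $k\to\infty$ and $u_0^{(k)}=u_1^{(k)}$ on $G$. With $T^{(k)}:=dd^cu_2^{(k)}\wedge\cdots\wedge dd^cu_n^{(k)}$, $\mu_k:=dd^cu_0^{(k)}\wedge T^{(k)}$ and $\nu_k:=dd^cu_1^{(k)}\wedge T^{(k)}$, the bounded case gives $\mu_k|_G=\nu_k|_G$ for every $k$. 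Now fix $\chi\in C^\infty_0(B)$ with $\chi\ge 0$ and an integer $m$, and test against $g_m:=\chi\,\min(m\varphi,1)$, a bounded quasi-continuous function that vanishes on $B\setminus G$. Because $g_m$ is supported in $G$, the bounded equality gives $\int g_m\,d\mu_k=\int g_m\,d\nu_k$, and letting $k\to\infty$ yields $\int g_m\,d\mu=\int g_m\,d\nu$. Finally $g_m\nearrow\chi\,\mathbf 1_G$ as $m\to\infty$, so monotone convergence gives $\int_G\chi\,d\mu=\int_G\chi\,d\nu$ for all such $\chi$, i.e. $\mu|_G=\nu|_G$, as required.

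The main obstacle is precisely the passage $k\to\infty$ in the last step: one must justify $\int g_m\,d\mu_k\to\int g_m\,d\mu$ for the fixed bounded quasi-continuous test function $g_m$, although $g_m$ is not continuous and the limiting potentials $u_i$ are unbounded. This does not follow from weak\/$^*$ convergence alone; it requires a strong convergence theorem for the mixed Monge-Amp\`ere operator under the simultaneous decreasing approximations $u_i^{(k)}\searrow u_i$ in $\mathcal{E}(D)$ (convergence of integrals against bounded quasi-continuous functions), together with the quasi-continuity of $\varphi$ and the fact that these measures put no mass on pluripolar sets \cite{Ce}. Controlling the behaviour of $\mu_k$ and $\nu_k$ near the plurifine boundary $\{\varphi=0\}$ is exactly where this convergence is needed.
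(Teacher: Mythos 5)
Your reduction to a single set $G=\{\varphi>0\}$ is the same as the paper's, and your treatment of the locally bounded case is correct: the polarization argument based on \cite[Corollary 4.3]{BT} is a legitimate substitute for the mixed-measure equality \cite[Corollary 3.4]{EKW} that the paper invokes at the same point. But the proof has a genuine gap exactly where you yourself flag it: the passage $\int g_m\,d\mu_k\to\int g_m\,d\mu$ as $k\to\infty$. The definition of $\mathcal{E}(D)$ and the convergence theorems behind it give only weak$^*$ convergence of the mixed measures, which tests continuous compactly supported functions; your $g_m=\chi\min(m\varphi,1)$ is merely quasi-continuous, and its discontinuities cluster precisely along the plurifine boundary $\{\varphi=0\}$, which is where mass of $\mu_k$ could in principle leak into or out of $G$ in the limit. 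Announcing that ``a strong convergence theorem'' is required, without stating one and verifying its hypotheses for the truncations $\max(u_i,-k)$, leaves the decisive step unproved --- and this limit passage is essentially the whole content of the proposition, since everything preceding it is the known bounded case.

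The paper closes this hole with a small but decisive trick that your argument is one step away from: instead of testing against the quasi-continuous cutoff $\min(m\varphi,1)$, multiply the measures by the plurisubharmonic weight $\varphi$ itself. From the bounded-case equality one gets $\varphi\, dd^cu_{j,0}\wedge T_j=\varphi\, dd^cu_{j,1}\wedge T_j$ on all of $B$, because both sides vanish off $G$ where $\varphi=0$; then Cegrell's Corollary 5.2 in \cite{Ce} provides exactly the weighted weak$^*$ convergence $\varphi\, dd^cu_{j,\ell}\wedge T_j\to\varphi\, dd^cu_\ell\wedge T$ for decreasing approximations, with no quasi-continuity argument and no mass control near $\{\varphi=0\}$, precisely because the weight is plurisubharmonic. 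Since $\varphi>0$ on $G$, the limiting equality $\varphi\mu=\varphi\nu$ on $B$ restricts to $\mu|_G=\nu|_G$. (The paper also arranges the approximation more carefully than your global truncation: it first reduces to $u_k\in\mathcal{F}(D)$ and truncates inside $B$ by $\max(j\psi,u_k|_B)$ with $\psi\in\mathcal{E}_0(B)$, so the approximants lie in $\mathcal{E}_0(B)$ and Cegrell's hypotheses apply verbatim.) If you insist on your route, you would need to prove a convergence theorem against bounded quasi-continuous test functions for decreasing sequences in $\mathcal{E}$, e.g.\ via convergence in capacity --- substantially more work than the weighted weak$^*$ argument, and not something you can simply assert.
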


\begin{proof}
It suffices of course to prove the theorem in case $\Omega=\{\varphi>0\}$,
where $\phi$ is a nonnegative plurisubharmonic function
on an open ball $B\subset D$. Without loss
of generality, we may assume that $u_k \in {\cal F}(D)$, $0 \leq k \leq n$.
Let $\psi \in {\cal E}_0(B)$, $\psi <0$, and define
$u_{j,k}:= \max\{j\psi, u_k|_B\}$, $k = 0, 1, ..., n.$
We easily have
$${\cal E}_0(B) \ni u_{j,k}\downarrow u_k|_B \text{ as } j\uparrow +\infty,
\  k = 0, ...,, n$$
and, by the hypotheses,
\begin{equation}\label{eq1.6}
u_{j,0} = u_{j,1} \text{ on } \Omega,  \forall j \geq 1.
\end{equation}
Write $T_j:= dd^cu_{j,2} \wedge ... \wedge dd^cu_{j,n}.$ Since
the $u_{j,k}$ are bounded plurisubharmonic functions in $B$,
it follows by Corollary 3.4 in \cite{EKW} that
$$dd^cu_{j,0} \wedge T_j = dd^cu_{j,1} \wedge T_j \text{ on } \Omega.$$
and hence
\begin{equation}\label{eq1.7}
\varphi dd^cu_{j,0} \wedge T_j = \varphi dd^cu_{j,1} \wedge T_j \text{ on } B
\end{equation}
because $\varphi=0$ on $B\setminus \Omega$. By Corollary 5.2 in \cite{Ce} we have
$$\varphi dd^cu_{j,0} \wedge T_j \to \varphi dd^c(u_0|_B) \wedge (T|_B)$$
and
$$\varphi dd^cu_{j,1} \wedge T_j \to \varphi dd^c(u_1|_B) \wedge (T|_B)$$
weakly* on $B$ as $j \to +\infty$. Hence, by (\ref{eq1.7}),
$$\varphi dd^c(u_0|_B) \wedge (T|_B) = \varphi dd^c(u_1|_B) \wedge (T|_B) \text{ on } B,$$
and therefore
$$dd^cu_0 \wedge T = dd^cu_1 \wedge T \text{ on } \Omega.$$
\end{proof}

\smallskip
\textit{Proof of Theorem \ref{thm1.1}.}
By Lemma \ref{lemma1.1} there exists a set
$J\subset \NN$ with the property that for each positive $j\in J$
there exists an open ball $B_j\subset \Omega$ and a
function  $\varphi_j\in \PSH_-(B_j)$ such that $\Omega\subset \bigcup_{j\in J}\{\varphi_j>0\}$
and that $P=\bigcup_{j\in J}\{\varphi_j>0\}\setminus \Omega$ is pluripolar.
For each $j\in J$ write $\Omega_j=\{\varphi_j>0\}$. 
Suppose that $u_1=v_1$, ..., $u_n=v_n$ on $\Omega$. The restrictions of
$u_1,...,u_n, v_1,...,v_n$  to $B_j$ are
equal on $\Omega_j\setminus P\subset \Omega$ since they are equal on $\Omega$,
and therefore they are equal on $\Omega_j$ by plurifine  continuity,
$P$ having an empty plurifine interior. We suppose that
$n\geq 2$, the case $n=1$ follows from the fact that
$dd^cu=\Delta u$ (the Laplacian of $u$ in distributional sense)
and \cite[1.XI.18]{D}.
It then follows by Proposition \ref{prop1.5} that
$dd^cu_1\wedge dd^cu_2\wedge...\wedge dd^cu_n |_{\Omega_j}
=dd^cv_1\wedge dd^cu_2\wedge...\wedge dd^cu_n|_{\Omega_j}$.
Let $k$ the larger integer $m\leq n-1$ such that
$$dd^cu_1\wedge dd^cu_2\wedge...\wedge dd^cu_n |_{\Omega_j}
=dd^cv_1\wedge...\wedge dd^cv_m\wedge dd^cu_{m+1}\wedge...\wedge dd^cu_n|_{\Omega_j}.$$
Suppose that $k<n-1$, then
\begin{align*}
dd^cu_1\wedge dd^c&u_2\wedge...\wedge dd^cu_n |_{\Omega_j}
= dd^cv_1\wedge...\wedge dd^cv_k\wedge dd^cu_{k+1}\wedge...\wedge dd^cu_n|_{\Omega_j}\\
&= dd^cu_{k+1}\wedge dd^cv_1\wedge...\wedge dd^cv_k\wedge dd^cu_{k+2}\wedge...\wedge dd^cu_n|_{\Omega_j}\\
&= dd^cv_{k+1}\wedge dd^cv_1\wedge...\wedge dd^cv_k\wedge dd^cu_{k+2}\wedge...\wedge dd^cu_n|_{\Omega_j}\\
&= dd^cv_1\wedge...\wedge dd^cv_m\wedge dd^cv_{k+1}\wedge...\wedge dd^cu_n|_{\Omega_j}
\end{align*}
The third equality holds by Proposition \ref{prop1.5}. 
But this contradicts the definition of $k$. It then follows 
that $k=n-1$ so that 
$$dd^cu_1\wedge dd^cu_2\wedge...\wedge dd^cu_n |_{\Omega_j}
=dd^cv_1\wedge...\wedge dd^cv_{n-1}\wedge dd^cu_n|_{\Omega_j}$$ 
and hence 
$$dd^cu_1\wedge dd^cu_2\wedge...\wedge dd^cu_n |_{\Omega_j}
=dd^cv_1\wedge...\wedge dd^cv_{n-1}\wedge dd^cv_n|_{\Omega_j}$$ 
again according to Proposition \ref{prop1.5} applied to the right hand of the above equality.
The proof is now complete.


\begin{thebibliography}{00}

\bibitem[AG]{AG} D.H Armitage, S.J. Gardiner: Classical potential theory. Springer Monographs in
Mathematics. \emph{Springer-Verlag London, Ltd., London}, 2001.

\bibitem[BT]{BT} E. Bedford, B. A.   Taylor: Fine topology,
\v Silov boundary and $(dd^{c})^{n}$.
\textit{J. Funct. Anal.} {\bf 72} (1987), no. 2, 225--251.

\bibitem[Bl]{Bl} Z. Blocki: The domain of definition of the complex Monge-Amp\`ere operator.
\textit{ Amer. J. Math.} {\bf 128} (2006), no. 2, 519--530.

\bibitem[Ce]{Ce} U. Cegrell: The general definition of the Monge-Amp\`ere
operator. \textit{Ann. Inst. Fourier}, {\bf 54} (2004), no. 1, 159--179.

\bibitem[D]{D} J. L. Doob:  Classical potential theory and its probabilistic
counterpart. Grundlehren der mathematischen Wissenschaften, 262.
\emph{Springer-Verlag, New York}, 1984.

\bibitem[EK]{EK} M. El Kadiri:  An equality on Monge-Amp\`ere measures.
\emph{J. Math. Anal. Appl.} 519 (2023), no. 2, Paper No. 126826.

\bibitem[EKW]{EKW} M. El Kadiri, Wiegerinck: Plurifinely psubharmonic functions
and the Monge-Amp\`ere operator. \textit{Potential Anal.} {\bf 41} (2014) no. 2, 469--485.

\bibitem[HH]{HH} L. M. Hai, P. H. Hiep: An equality on the complex Monge-Amp\`ere measures.
\textit{J. Math. Anal. Appl.} {\bf 444} (2016), no 1, 503-511.

\bibitem[Kl]{Kl} M. Klimek:  Pluripotential Theory, Clarendon Press, Oxford, 1991.
\end{thebibliography}
\end{document}